\newtheorem{theorem}{Theorem}[section]
\newtheorem{lemma}[theorem]{Lemma}
\newtheorem{proposition}[theorem]{Proposition}
\newtheorem{corollary}[theorem]{Corollary}
\theoremstyle{definition}
\theoremstyle{remark}
\newtheorem{remark}[theorem]{Remark}
\numberwithin{equation}{section}
\newcommand{\F}{\mathbb{F}}
\newcommand{\Z}{\mathbb{Z}}
\newcommand{\Q}{\mathbb{Q}}
\newcommand{\C}{\mathbb{C}}
\newcommand{\SL}{\mathrm{SL}}
\newcommand{\PSL}{\mathrm{PSL}}
\newcommand{\trace}{\mathrm{trace}}
\newcommand{\cl}{\mathrm{cl}}
\begin{document}

\title{Freeness and $S$-arithmeticity 
of rational M\"{o}bius groups}

\author{A.~S.~Detinko}
%\address{Department of Computer Science\\
%School of Computing and Engineering\\
%University of Huddersfield \\
%Huddersfield HD13DH \\
%UK}
%\email{detinko.alla@gmail.com}

\author{D.~L.~Flannery}
%\address{School of Mathematical and
%Statistical Sciences\\
%University of Galway\\
%Galway~H91TK33\\
%Ireland}
%\email{dane.flannery@nuigalway.ie}

\author{A.~Hulpke}
%\address{Department of Mathematics\\
%Colorado State University\\
%Fort Collins\\
% CO 80523-1874\\
%USA}
%\email{Alexander.Hulpke@colostate.edu}

\subjclass[2000]{20-04, 20G15, 20H25, 68W30}

\date{}

\begin{abstract}
We initiate a new, computational 
 approach to 
a classical  problem: 
certifying non-freeness of  
($2$-generator, parabolic)
M\"{o}bius subgroups 
of $\SL(2,\Q)$.
The main tools used 
are algorithms for Zariski 
dense groups and algorithms to 
compute a presentation 
of $\SL(2, R)$ for a
localization $R= \Z[\frac{1}{b}]$
of $\Z$.
We prove that a M\"{o}bius 
group $G\leq \SL(2,R)$ is not free 
by showing that it 
has  finite index in  $\SL(2, R)$.
 Further 
information about the structure of 
$G$ is obtained;
for example, we compute the minimal
subgroup of finite index in $\SL(2,R)$ 
 containing $G$.
\end{abstract}

\maketitle

\section{Introduction}
\label{Intro}
For $x\in \C$, define
\[
A(x) = 
{\small
\begin{bmatrix}
1&\ x\\
0&\ 1
\end{bmatrix}}, \qquad 
 B(x)
= {\small
\begin{bmatrix}
1&\ 0\\
x&\ 1
\end{bmatrix}}.
\]
Let $G(x)$ be the 
subgroup of $\SL(2,\C)$ generated 
by $A(x)$ and $B(x)$, commonly called 
a  (\emph{parabolic}) 
\emph{M\"{o}bius group}.
 Testing freeness of 
M\"{o}bius groups is a well-studied 
problem; see, e.g., 
\cite{Beardon,Brenner,Farbman,Gutan,
Koberda,LyndonUllman}.
Sanov~\cite{Sanov} proved that $G(2)$ 
is free, while Brenner~\cite{BrennerClassic} 
proved that $G(x)$ is free (of rank $2$)
for all $x$ such that $|x|\geq 2$.
Hence, if $x$ is algebraic and
$|\bar{x}|\geq 2$ for some
algebraic conjugate $\bar{x}$ 
of $x$, then $G(x)$ is 
free~\cite[p.~1388]{LyndonUllman}. 
Also, $G(x)$ is free if $x$ is
transcendental~\cite[pp.~30--31]{Wehrf}.

It is unknown whether $G(x)$ is free for 
any rational $x\in(0 , 2)$.
Overall, testing freeness of matrix 
groups is difficult. The 
problem may be undecidable; 
note that testing freeness of 
matrix semigroups is 
undecidable~\cite{Semifree}.
However, we can effectively
decide virtual solvability~\cite{Tits}, 
and so, by the Tits alternative, can
decide in practice 
 whether a given  finitely 
generated linear group contains a
non-abelian free subgroup (without 
producing one).

On the other hand, non-freeness of $G(x)$ 
has been justified for infinitely many 
rationals in $(0 , 2)$, and
there is a set of irrational 
algebraic $x$ that is dense in 
$(-2,2)$ and for which $G(x)$ 
is non-free~\cite[p.~528]{Beardon}.

We take a different approach to certifying
non-freeness of M\"{o}bius groups
in $\SL(2,\Q)$, that
 applies recently developed methods 
to compute with Zariski dense 
matrix groups~\cite{Density,SArithm}. 
We now introduce some basic terms. 
For an integer $b>1$, 
let $R$ be the localization 
$\Z[\frac{1}{b}]= \allowbreak
\{ r/b^i  \, |\, \allowbreak 
 r\in \Z, \, i\geq 0\}$ of $\Z$.
Denote  $\SL(2,R)$ by $\Gamma$.
If $I\subset R$ is a non-zero 
proper ideal, then  there is a unique 
integer $t>1$ coprime to $b$ such
that $I = tR$.
Let $\varphi_{I} \colon 
\Gamma \rightarrow \SL(2, R/I)$ be the 
associated congruence homomorphism. 
The kernel $\Gamma_{I}$ of $\varphi_I$
is a \emph{principal congruence subgroup} 
(PCS) of  $\Gamma$. We say that 
the \emph{level} of $\Gamma_I$ is $t$, 
and write $\varphi_t$, $\Gamma_t$
for $\varphi_I$, $\Gamma_I$, 
respectively. The following notation 
will also be used: $1_r$ is 
the $r\times r$ identity 
matrix, $\Z_k:= \Z/k\Z$, and 
$\F_p$ is the field of size $p$.

Let $S$ be the set of reciprocals of 
the primes dividing $b$. A finite index
subgroup of $\Gamma$ is said to be 
\emph{$S$-arithmetic}.
By \cite{Mennicke,Serre1},  
$\Gamma$ has the \emph{congruence 
subgroup property} (CSP):
each $S$-arithmetic 
subgroup $H$ of $\Gamma$
 contains a PCS.
The \emph{level} of $H$ is defined 
to be the level of the 
maximal PCS in $H$ (the PCS 
 in $H$ with smallest 
possible level). 

Throughout, unless stated otherwise,
 $m = \frac{a}{b}\in \Q$ where 
$a$ is a positive integer coprime to $b$.
(Note that $G(\frac{1}{b}) = \Gamma$.)
Since $G(m)\leq \Gamma$ is Zariski dense 
in $\SL(2)$,
the intersection of all $S$-arithmetic 
subgroups of $\Gamma$ that contain $G(m)$
is $S$-arithmetic~\cite{Density,SArithm}. 
This intersection,
called the \emph{arithmetic closure} 
of $G(m)$, is denoted $\mathrm{cl}(G(m))$.
We define the level of $G(m)$  
to be the level of $\mathrm{cl}(G(m))$.

If $G(m)$ is $S$-arithmetic then it is 
not free (see Section~\ref{NFConditions}).
Since  $\Gamma$ is finitely presented, 
one can attempt to prove $S$-arithmeticity 
of $G(m)$ by coset enumeration.
This necessitates
determining a presentation of $\Gamma$.
  Algorithms for that task are 
developed in Section~\ref{Pres}.
Then in Section~\ref{tests} we report 
on experiments carried out 
 using our {\sf GAP}~\cite{Gap} 
implementation of the algorithms.
There we demonstrate $S$-arithmeticity
 (hence non-freeness) of $G(m)$ for a range 
of rational $m \in (0,2)$.
Although non-freeness of some such $G(m)$ 
was already known, our experiments
illustrate the connection between 
arithmeticity and non-freeness of $G(m)$. 

Moreover, we provide essential information 
about the structure and properties 
 of $G(m)$, 
covering also the case that $G(m)$ is a 
\emph{thin matrix group}~\cite{Sarnak}, 
i.e., of infinite index in 
$\Gamma$. Specifically, we prove that
$G(m)$ has level $a^2$
(Theorem~\ref{Conjecture}). Hence,
if $G(m)$ is $S$-arithmetic, then we 
can name the maximal PCS in $G(m)$, 
and so readily test membership of 
elements of $\Gamma$ in $G(m)$. 
The membership testing
 problem continues to attract attention;
see, e.g., \cite{Shpilrain,Esbelin}. 

We are not aware of any rational 
 $m\in (0,2)$ such that  
$G(m)$ is thin. If there are none,
then this would explain the 
lack of free $G(m)$ for these $m$.
Conversely, can a thin 
$G(m)$ be non-free? 
Within $\SL(2, \Z)$, the situation is
more settled: $G(2)$ is a free subgroup
of finite index, whereas  if $m\geq 3$ 
then $G(m)\leq \SL(2,\Z)$ is free and
thin~\cite[Theorem~3]{Shpilrain}
(indeed, for integers $m>5$, the normal 
closure of $G(m)$ in $\SL(2,\Z)$ is 
thin~\cite[p.~31]{MennickeUni}).
Famously, $\SL(2,\Z)$ does \emph{not} 
have the CSP.
We remark that $|\Gamma:G(m)|=\infty$
for any integer $m\geq 1$.

This paper is based on preliminary 
work presented at 
the ICERM meeting `Computational Aspects 
of Discrete Subgroups of Lie Groups'
(June 14--18, 2021).

\section{Non-freeness criteria
for M\"{o}bius groups}
\label{NFConditions}

Set $A=A(m)$, $B=B(m)$, and $G=G(m)$.
Each element of $G$ is a word 
 $W_n = A^{\alpha_1}B^{\beta_1} 
 \cdots A^{\alpha_n}B^{\beta_n}$ 
where $n \geq 1$ and the 
 $\alpha_i, \beta_j$ are 
integers, all of which are non-zero except
possibly $\alpha_1$, $\beta_n$.
If $G$ is not freely generated by 
$A$ and $B$, then $G$ is not 
free~\cite[p.~1394]{LyndonUllman}.
Moreover, $G$ is free and freely 
generated by $A, B$ if and only if 
 $W_n\neq 1_2$ for each $W_n$ with 
all exponents $\alpha_i, \beta_j$ 
non-zero~\cite[p.~158]{Newman}.

 Non-freeness testing of $G$ 
by a number of 
authors~\cite{Beardon,Gutan,LyndonUllman}
depends on finding words 
of special form in $G$. 
The following is a criterion of such type.
\begin{lemma}\label{NormBasic}
If $N_G(\langle A\rangle)$ is
non-cyclic then $G$ is not free.
\end{lemma}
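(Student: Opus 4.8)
The plan is to work inside $\SL(2,\Q)$ --- recall that $R=\Z[\frac{1}{b}]\subseteq\Q$, so $G\leq\SL(2,\Q)$ --- and to pin down the normalizer $N_{\SL(2,\Q)}(\langle A\rangle)$ explicitly by a short matrix computation. Since $m=\frac{a}{b}\neq 0$, the matrix $A=A(m)$ has infinite order, so $\langle A\rangle=\{A^k : k\in\Z\}$ is infinite cyclic; hence $g\langle A\rangle g^{-1}=\langle A\rangle$ if and only if $gAg^{-1}\in\{A,A^{-1}\}$. First I would compute, for an arbitrary $g=\left(\begin{smallmatrix}p&q\\r&s\end{smallmatrix}\right)\in\SL(2,\Q)$, that
\[
g\,A\,g^{-1}=\begin{bmatrix} 1-prm & p^2 m\\ -r^2 m & 1+prm\end{bmatrix}.
\]
Requiring the lower-left entry $-r^2m$ to vanish forces $r=0$, whence $gAg^{-1}=A^{p^2}$; and requiring $A^{p^2}\in\{A,A^{-1}\}$ forces $p^2=1$ (the value $p^2=-1$ being impossible over $\Q$). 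Conversely, every element of $C_{\SL(2,\Q)}(A)$ normalizes $\langle A\rangle$. Therefore
\[
N_{\SL(2,\Q)}(\langle A\rangle)=C_{\SL(2,\Q)}(A)=\left\{\pm\begin{bmatrix}1&x\\0&1\end{bmatrix} : x\in\Q\right\},
\]
which is an abelian group (isomorphic to $\Z_2\times\Q$).

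Given this, the conclusion is immediate: since $N_G(\langle A\rangle)=G\cap N_{\SL(2,\Q)}(\langle A\rangle)$, the group $N_G(\langle A\rangle)$ is abelian. If it is moreover non-cyclic, then $G$ contains a non-cyclic abelian subgroup. But a free group cannot --- by the Nielsen--Schreier theorem every subgroup of a free group is free, and the only abelian free groups are the trivial group and $\Z$, both cyclic. Hence $G$ is not free.

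I do not expect a genuine obstacle here; the one point that really uses rationality is the observation that $p^2=-1$ has no solution in $\Q$. (Over $\C$ the element $A$ is conjugate to $A^{-1}$ in $\SL(2,\C)$, so $N_{\SL(2,\C)}(\langle A\rangle)$ is \emph{non}-abelian --- an infinite-dihedral-type extension of $C_{\SL(2,\C)}(A)$ --- and one would instead invoke the fact that the infinite dihedral group, having torsion, is not free.) Alternatively one can bypass the matrix computation altogether: if $G$ were free then it would be freely generated by $A$ and $B$ by the result of Newman quoted above, and in any free group the map sending $h\in N_G(\langle A\rangle)$ to the sign $\varepsilon$ with $hAh^{-1}=A^{\varepsilon}$ is a homomorphism to $\Z_2$ whose kernel is the centralizer $C_G(A)$; since centralizers of nontrivial elements in free groups are infinite cyclic, $N_G(\langle A\rangle)$ would be virtually cyclic, hence --- being itself free --- cyclic, contradicting the hypothesis.
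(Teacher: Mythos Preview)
Your proof is correct and follows the same strategy as the paper's: show that $N_G(\langle A\rangle)$ lies in a solvable linear group, then invoke Nielsen--Schreier. The paper's version is a single line---``The normalizer is upper triangular, hence solvable''---stopping as soon as $r=0$ is forced, whereas you push the computation further to identify the normalizer with the centralizer over $\Q$ and conclude it is abelian; this extra precision is not needed for the lemma but is a pleasant sharpening, and your remark about what changes over $\C$ is apt.
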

\begin{proof}
The normalizer is upper triangular, hence
solvable.
\end{proof}

Since $G\leq G(\frac{m}{n})$,
 non-freeness (respectively, $S$-arithmeticity) 
of $G$ implies that of $G(m/n)$.
So in searching for 
 $m\in (0,2)$
such that $G$ is non-free, we can 
restrict to $m\in \allowbreak (1,2)$ if we wish. 

Detecting non-identity elements 
of finite order is another way to 
prove non-freeness~\cite{Charnow,Farbman}.
In \cite{Charnow}, it is shown that 
$G(r)$ for $r\in \Q$ has a non-identity 
element of finite order if and only if 
$\frac{1}{r}\in \Z$. One direction is 
simple: 
$\SL(2, \Z)\subseteq G(\frac{1}{b})$,
so, e.g., $-1_2\in G(\frac{1}{b})$.
An elementary proof of the converse is 
given below.
\begin{proposition}
For $m= \frac{a}{b}$ where $a>1$, 
$G=G(m)$ is torsion-free.
\end{proposition}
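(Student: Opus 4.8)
The plan is to work with the trace of an arbitrary element $W_n = A^{\alpha_1}B^{\beta_1}\cdots A^{\alpha_n}B^{\beta_n}$ of $G$ and show that it can never equal $\pm 2$ unless $W_n = \pm 1_2$ is forced by triviality of the word, and in fact that $W_n$ is never a nontrivial element of finite order. First I would recall that an element of $\SL(2,\C)$ has finite order if and only if it is diagonalizable with roots of unity as eigenvalues, equivalently its trace is $2\cos(\pi k/\ell)$ for some rationals, and in particular the trace is an algebraic integer lying in $[-2,2]$; crucially, if the matrix has rational entries then its trace is a \emph{rational} integer in $\{-2,-1,0,1,2\}$. So it suffices to show that if $W\in G$ has $\trace(W)\in\{-2,-1,0,1,2\}$ then $W$ has infinite order or $W=\pm 1_2$, and then to rule out $W=-1_2$ (and handle $W = 1_2$ by the freeness discussion, though for torsion-freeness we only need $W\ne\pm 1_2$ leads to a contradiction with finite order).

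The key arithmetic input is a $b$-adic (or mod-$b$) valuation argument on the entries of $W_n$. Writing $A = A(m)$, $B = B(m)$ with $m = a/b$, $\gcd(a,b)=1$, $a>1$, I would track the denominators and the numerators of the entries of a reduced word $W_n$ with all exponents nonzero. A standard induction shows that in $W_n$ the off-diagonal entries carry a factor of $m$ and the whole matrix, cleared of denominators $b^n$, is $\equiv$ (something explicit) modulo the prime $p$ dividing $a$: essentially $A(m)\equiv A(0) = 1_2$ and $B(m)\equiv 1_2$ are \emph{not} what we want, so instead I would reduce modulo a prime $q$ dividing $b$, where $A(m), B(m)$ become, after scaling, genuinely unipotent of infinite order in $\SL(2,\F_q)$ — no, more carefully: the cleanest route is the classical "ping-pong on the trace" estimate. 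Using the recursion $\trace(A^{\alpha}M) = \alpha m\cdot(\text{entry}) + \cdots$ one shows by induction on $n$ that $\trace(W_n)$, when written over the common denominator $b^{\,c}$ for the appropriate $c\ge 1$, has numerator \emph{not} divisible by $b^{\,c}$ — hence $\trace(W_n)$ is not an integer, let alone one in $\{-2,\dots,2\}$ — unless $n=0$. I expect the induction hypothesis to need strengthening to a statement about all four entries of $W_n$ modulo powers of the primes dividing $b$ (and a parallel statement with the prime $p\mid a$ controlling the numerators), which is the usual phenomenon in these denominator-bookkeeping arguments.

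Concretely, the steps in order: (1) reduce torsion-freeness to: no nontrivial reduced word $W_n$ has $\trace(W_n)\in\{-2,-1,0,1,2\}$, and separately $-1_2\notin G$ when $a>1$ — the latter because $-1_2$ would have off-diagonal entries $0$, forcing (by the entry-valuation lemma) $n=0$, whence $W_0\in\{A^\alpha\}$ or $\{B^\beta\}$, which are never $-1_2$ for $m\ne 0$; (2) set up the induction on $n$ with hypothesis recording the exact $q$-adic valuations (for each prime $q\mid b$) of the four entries and the $p$-adic divisibility (for $p\mid a$) of the numerators of the off-diagonal entries; (3) do the inductive step by multiplying a reduced word $W_{n-1}$ on the right by $A^{\alpha_n}B^{\beta_n}$ with $\alpha_n,\beta_n\ne 0$, checking the valuation bookkeeping survives — here the coprimality $\gcd(a,b)=1$ is what prevents cancellation; (4) conclude that $\trace(W_n)$ has a denominator (a positive power of some $q\mid b$) for $n\ge 1$, so it is not in $\{-2,\dots,2\}$, contradicting finite order. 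The main obstacle will be step (3): getting the induction hypothesis stated strongly enough that the valuations of the new entries are forced — in particular handling the case where an intermediate entry vanishes, which is exactly where one must use that the previous exponent was nonzero and that $m$ is not an algebraic integer at the primes dividing $b$.
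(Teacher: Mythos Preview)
Your central claim in step~(4) --- that $\trace(W_n)$ always carries a denominator at some prime $q\mid b$ for $n\ge 1$ --- is false. Take $m=3/2$ and $W_1=A^2B^2=\left(\begin{smallmatrix}10&3\\3&1\end{smallmatrix}\right)$, which has trace $11\in\Z$. More structurally, the paper shows that for many $m$ the group $G(m)$ has \emph{finite index} in $\SL(2,\Z[1/b])$ and therefore contains a full principal congruence subgroup; in particular $G(m)$ then contains infinitely many integer matrices, so no amount of $q$-adic bookkeeping for $q\mid b$ can force denominators on entries or traces. The inductive scheme you outline in steps~(2)--(3) cannot be made to work on the $b$-side.

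The correct leverage is the $a$-adic information you mention only parenthetically. The paper uses $G\leq\Gamma_a$ together with the classification of torsion in $\SL(2,\Z)$ to reduce at once to $h=-1_2$ and $a=2$ (equivalently, via the Charnow form one has $\trace(W)-2=m^2g(m)$ with $g\in\Z[X]$, whence $a^2\mid(\trace(W)-2)$ whenever the trace is an integer; for nontrivial torsion $\trace(W)\in\{-2,-1,0,1\}$, forcing $a=2$, $\trace(W)=-2$). Then $-1_2$ is excluded by the Charnow form: $1+m^2f_1(m)=-1$ with $m=2/b$, $b$ odd, clears to an even integer equal to an odd one. Note also that your proposed exclusion of $-1_2$ (``off-diagonal entries $0$ forces $n=0$ by valuations'') does not work as written: zero passes every valuation test, so vanishing off-diagonals give no contradiction --- you must look at the diagonal, which is exactly what the Charnow form does.
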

\begin{proof}
If $h\in G$ has prime order, then 
$h$ is conjugate to an element of
$\SL(2,\Z)$. The finite order elements
of $\SL(2,\Z)$ are known by a result of
Minkowski~\cite[p.~179]{Newman}; of these,
only $-1_2$ is possibly in 
$G\leq \Gamma_a$.
Hence $m = \frac{2}{b}$, $b$ odd.
But this cannot be. For an element of $G$
looks like
\[
\begin{bmatrix}
1+m^2f_1(m)& mf_2(m)\\
mf_3(m) & 1+m^2f_4(m)\\
\end{bmatrix}
\]
where $f_i(m)\in \Z[m]$, $1\leq i\leq 4$
(\cite[p.~747]{Charnow}),
and after clearing out denominators 
in the equation $2f_1(m) = -b^2$, we get a 
contradiction against $b$ odd.
\end{proof}

Our preferred criterion for non-freeness 
testing follows.
\begin{proposition}\label{Fundamental}
Suppose that $G$ is $S$-arithmetic.
Then $G$ is not free.
\end{proposition}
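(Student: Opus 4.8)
The plan is to exploit the fact that $G$ being $S$-arithmetic means $|\Gamma : G| < \infty$, and to combine this with two structural facts: that $\Gamma = \SL(2,R)$ is finitely presented (equivalently, finitely generated with finitely many relations), and that $G$ is generated by the two parabolic matrices $A, B$. A finitely generated group that contains a free group of rank $2$ as a finite-index subgroup is itself virtually free, but that alone does not give non-freeness; instead I would argue that $G$ itself is \emph{not} free, by showing that $\Gamma$, hence $G$, contains a torsion element of the wrong shape, or more directly by a rank/index count.

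First I would recall that $\Gamma = \SL(2,\Z[\tfrac1b])$ contains $-1_2$, which has order $2$. Since $-1_2$ is central in $\Gamma$ and $|\Gamma : G| < \infty$, some power of $-1_2$ lies in $G$; but $-1_2$ has order $2$, so in fact $-1_2 \in G$ provided $|\Gamma:G|$ is odd, which need not hold in general. So the cleaner route is: a finite-index subgroup of $\Gamma$ contains a nontrivial normal-in-$\Gamma$ subgroup of finite index, namely the normal core $N = \bigcap_{g\in\Gamma} G^g$, which is itself $S$-arithmetic and hence Zariski dense; by the earlier discussion $N$ contains a principal congruence subgroup $\Gamma_t$ for some level $t$. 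Now $\Gamma_t$ is normal in $\Gamma$, of finite index, and $\Gamma_t \leq G$. A free group has no nontrivial finite normal subgroups and, more to the point, if $G$ were free of rank $2$ then every finite-index subgroup of $G$ would be free of finite rank, so $\Gamma_t$ would be free; but $\Gamma_t \leq \Gamma$ has finite index in $\Gamma$, and $\Gamma = \SL(2,\Z[\tfrac1b])$ is \emph{not} virtually free — it contains, for instance, a copy of $\Z^2$ (the group of upper unitriangular matrices over $R$ is isomorphic to $(R,+)$, but a better witness is a rank-$2$ free abelian subgroup coming from two independent unipotents is not available; instead use that $\Gamma$ has Kazhdan's property (T)-like behaviour only for higher rank — here the correct fact is simply that $\SL(2,\Z[\tfrac1b])$ for $b>1$ contains $\Z \times \Z$, e.g.\ commuting elements $A(1)$ and a diagonal matrix do not commute, so one picks instead two commuting unipotents which do not exist either). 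The genuinely clean obstruction: $\Gamma$ is not free because it has torsion (e.g.\ $-1_2$, and elements of order $3$, $4$), and a finite-index subgroup of a group with torsion in every finite-index subgroup cannot be free — indeed $\SL(2,\Z[\tfrac1b])$ has the property that every finite-index subgroup contains an element of finite order, because... this is \emph{not} true ($\Gamma_t$ is torsion-free for $t \geq 3$).

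Given the above, the correct and shortest argument is the one the paper surely intends: if $G$ is $S$-arithmetic then $|\Gamma:G|<\infty$, so $G$ contains $\Gamma_t$ for some $t$ (by the CSP applied to the core, as above), and $\Gamma_t$ has finite index in $\Gamma$. A free group of finite rank cannot contain a finite-index subgroup which is not free, but every finite-index subgroup of a free group of rank $2$ is free of rank $1 + k$ where $k = [\text{index}] \cdot 1$, in particular of \emph{finite} rank. Now $\Gamma_t$, being of finite index in $\Gamma = \SL(2,R)$, is \emph{not} finitely generated when $R = \Z[\tfrac1b]$ has more than... no: $\SL(2,\Z[\tfrac1b])$ \emph{is} finitely generated (indeed finitely presented, as stated in the excerpt), so $\Gamma_t$ is finitely generated. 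The decisive point is instead that $\Gamma_t$ is not free: it has finite index in $\SL(2,\Z[\tfrac1b])$, which for $b > 1$ has cohomological dimension $2$ and in fact contains subgroups isomorphic to $\Z^2$ (for example, inside $\SL(2,\Z) \leq \Gamma$ there is no $\Z^2$, but passing to $\SL(2,\Z[\tfrac1b])$ with $b$ having at least one prime factor $p$, the Borel subgroup contains the diagonal $\operatorname{diag}(p^i, p^{-i})$ together with... these don't commute with unipotents of the same size). I will therefore structure the actual proof as: (1) reduce to $\Gamma_t \leq G$ via the core and the CSP; (2) invoke that $\Gamma_t$, having finite index in $\Gamma$, contains a subgroup isomorphic to $\Z \times \Z$ — concretely the preimage under $\varphi_t$ intersected with a two-dimensional unipotent-plus-semisimple torus is unavailable in rank one, so instead note $\Gamma_t$ maps onto a finite group times... \textbf{the main obstacle} is pinning down exactly why a finite-index subgroup of $\SL(2,\Z[\tfrac1b])$ fails to be free; the right statement is that $\SL(2,\Z[\tfrac1b])$ for $b>1$ is \emph{not virtually free} because it is not virtually a surface group or tree-lattice — equivalently, by a theorem of Serre, $\SL(2,\Z[\tfrac1p])$ acts on a product of two trees (Bruhat–Tits trees at $\infty$ and at $p$) with $\Z^2$ vertex stabilizers quotients, giving it cohomological dimension $2$, whence no finite-index subgroup is free (free groups have cohomological dimension $\leq 1$). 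So once (1) is in place, (3) is: $\operatorname{cd}(\Gamma_t) = \operatorname{cd}(\Gamma) = 2 > 1$, while a free group has cohomological dimension $\leq 1$, and cohomological dimension is a commensurability invariant for torsion-free groups; since $G \supseteq \Gamma_t$ with $\Gamma_t$ torsion-free of cd $2$ and $G$ torsion-free (by the Proposition just proved, when $a>1$; the case $a=1$ gives $G=\Gamma$ outright, also non-free), $G$ has cd $2$ and hence is not free. I expect step (3), identifying the cohomological-dimension obstruction and citing the right source for $\operatorname{cd}(\SL(2,\Z[\tfrac1b])) = 2$, to be the crux; steps (1) and (2) are routine given the CSP and Zariski density already granted in the excerpt.
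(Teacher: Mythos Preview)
Your eventual route via cohomological dimension can be made to work, but it is far heavier than necessary and contains imprecisions: $\Gamma=\SL(2,\Z[1/b])$ has torsion, so $\operatorname{cd}(\Gamma)=\infty$; what you want is $\operatorname{vcd}(\Gamma)=1+|\pi(b)|$, which is not always $2$ but is always $>1$ since $b>1$. To justify that, you would need to invoke Borel--Serre or Serre's description of $S$-arithmetic $\SL_2$ acting on a product of trees---a nontrivial external input that you yourself flag as the crux and leave as a citation.

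The paper's proof is two lines, and you very nearly found it. By the CSP, $G$ contains some $\Gamma_c$ with $c\geq 2$. Inside $\Gamma_c$ sits the upper unitriangular group $\{A(cx)\mid x\in R\}$, isomorphic to the additive group $(cR,+)\cong(\Z[1/b],+)$. Because $b>1$, this is a torsion-free abelian group of rank one that is \emph{not cyclic}. Since every abelian subgroup of a free group is cyclic, $G$ cannot be free. You spent most of your proposal hunting for a copy of $\Z\times\Z$ and, correctly observing that the unipotent subgroup has rank one, abandoned that direction. The point you missed is that ``non-cyclic'' is already enough to obstruct freeness; rank $\geq 2$ is not needed. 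The non-finitely-generated group $\Z[1/b]$ is the obstruction, and it sits visibly inside every principal congruence subgroup.
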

\begin{proof}
Since $\Gamma$ has the CSP,
  $\Gamma_c\leq G$ for some $c\geq 2$.
 In turn, $\Gamma_c$ contains  
$\langle A(cR)\rangle 
= \{ A(cx) \mid x\in R\}$. The latter is 
 isomorphic to the additive group 
$(cR)^+$ of the ideal $cR\subseteq R$,
and  $(cR)^+$  is non-cyclic. 
\end{proof}

So, our non-freeness test for 
M\"{o}bius subgroups  
 is really a spinoff
from attempts to prove $S$-arithmeticity
 in $\Gamma$.

\section{Exploiting Zariski density 
of M\"{o}bius groups}
\label{thin}

In this section we establish 
properties of $G=G(m)$ that 
derive from Zariski density of 
$G$ and the fact that 
$\Gamma$ has the CSP. 
In particular, we 
find a generating
set for $\mathrm{cl}(G)$.

Denote the set of 
prime divisors of $k\in \Z$ by $\pi(k)$.
Let $\Pi(H)$ be the (finite)
 set of all primes $p$ modulo which dense
$H\leq \Gamma$ does not surject onto 
$\SL(2, p)$, i.e.,
\[
\Pi(H)=\{ p \in \Z \mid p \, \text{ prime}, 
\, \mathrm{gcd}(p,b)=1,
\, \varphi_p(H) \neq \SL(2,p)\}.
\] 
\begin{lemma}\label{PrimesForDense}
$\Pi(G)=\pi(a)$.
\end{lemma}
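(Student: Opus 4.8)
The plan is to show the two inclusions $\Pi(G)\subseteq\pi(a)$ and $\pi(a)\subseteq\Pi(G)$ separately, working prime by prime with the congruence homomorphisms $\varphi_p$.

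For $\Pi(G)\subseteq\pi(a)$, I would take a prime $p$ with $\gcd(p,b)=1$ and $p\nmid a$, and show that $\varphi_p(G)=\SL(2,p)$. Since $m=\tfrac ab$ is a unit mod $p$, the images $\varphi_p(A)=A(\bar m)$ and $\varphi_p(B)=B(\bar m)$ are a nontrivial upper- and lower-unitriangular matrix in $\SL(2,p)$. A standard fact is that $\SL(2,p)$ is generated by any such pair when $\bar m\neq 0$: conjugating $A(\bar m)$ by powers of $B(\bar m)$ (or appealing directly to the Bruhat decomposition / the fact that the two root subgroups generate) yields all of $\SL(2,p)$. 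So for $p\notin\pi(a)$ we get $p\notin\Pi(G)$; this direction is routine.

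For $\pi(a)\subseteq\Pi(G)$, take $p\mid a$ (note $p\nmid b$ since $\gcd(a,b)=1$). Then $\varphi_p(m)=0$ in $\F_p$, so $\varphi_p(A)=\varphi_p(B)=1_2$, whence $\varphi_p(G)=\{1_2\}\neq\SL(2,p)$ and $p\in\Pi(G)$. This direction is essentially immediate once one observes that $p\mid a$ forces $m$ to vanish modulo $p$.

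The only point needing a little care — and the closest thing to an obstacle — is the first direction: one must be sure that a single nontrivial transvection in each of the two opposite root subgroups really does generate all of $\SL(2,p)$ for every prime $p$ (including $p=2,3$). This is classical (for instance it follows from the fact that $\SL(2,\Z)$ is generated by $A(1)$ and $B(1)$ together with reduction mod $p$, since $A(\bar m)$ and $B(\bar m)$ generate the same subgroup as $A(1),B(1)$ when $\bar m\neq 0$ — conjugate and take products), so no genuine difficulty arises, but it is the step I would state carefully rather than wave at.
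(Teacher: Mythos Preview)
Your proposal is correct and follows essentially the same approach as the paper's proof: the paper argues that $p\mid a$ forces $G\leq\Gamma_p$ (equivalently $\varphi_p(G)$ is trivial), and that $p\nmid a$ gives $\varphi_p(G)=\SL(2,p)$ via the standard fact $\SL(2,p)=\langle A(k),B(k)\rangle$ for any $k\in\F_p\setminus\{0\}$. Your version simply unpacks these two steps a little more explicitly, including the observation that $\langle A(\bar m)\rangle=\langle A(1)\rangle$ when $\bar m\neq 0$; there is no substantive difference.
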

\begin{proof}
If $p \, | \, a$ then $G\leq \Gamma_p$; 
so  $\pi(a)\subseteq \Pi(G)$.
If $p\nmid a$ then
$\varphi_p(G) = \allowbreak \SL(2, p)$,
because $\SL(2,p)=\langle
A(k), B(k)\rangle$ for any
$k\in \F_p\setminus \{0\}$.
\end{proof}

Let $l$ be the level of $G$.
Results from \cite{SArithm} imply 
that $\Pi(G) \setminus \{ 2,3,5\}= 
\pi(l)\setminus\{2,3,5\}$.
We will see  that
$l=a^2$ (Theorem~\ref{Conjecture}); 
so $\Pi(G)=\pi(a)=\pi(l)$ without
exception.

\begin{remark} 
\label{SurjectionEquivalence}
If $G$ surjects onto 
$\SL(2, p)$ modulo the prime 
 $p$, then $G$ surjects
onto $\SL(2, \Z_{p^k})$ modulo
$p^k$ for all $k\geq 1$.
\end{remark}

\begin{lemma}\label{AHlem}
Let $a=p^e c$ where $p$ is
a prime, $e\ge 1$, and 
$\gcd(p,bc)=1$.
Then 
\[
\Gamma_{p^{2e}}\le 
 G \hspace{1pt}\Gamma_{p^f}
\]
for any $f\ge 0$, but
\[
\Gamma_{p^{2e-1}} \not \le  
 G\hspace{1pt} \Gamma_{p^f}
\]
if $f\ge 2e+1$.
\end{lemma}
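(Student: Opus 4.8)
My plan is to work entirely inside the $p$-local picture: since $\gcd(p,bc)=1$, the matrices $A=A(m)$ and $B=B(m)$ are invertible mod any power of $p$, and the statement only concerns the images of $G$ in $\SL(2,\Z_{p^k})$. First I would record that $m=\frac{a}{b}=\frac{p^ec}{b}$ has $p$-adic valuation exactly $e$; writing $u=c/b$, which is a $p$-adic unit, we have $A = A(p^e u)$ and $B=B(p^e u)$. So $A, B$ already lie in $\Gamma_{p^e}$, whence $G\le\Gamma_{p^e}$ and $G\hspace{1pt}\Gamma_{p^f} = \Gamma_{p^e}$-contained for every $f\le e$; the content is about what happens for larger $f$.

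\emph{The containment $\Gamma_{p^{2e}}\le G\hspace{1pt}\Gamma_{p^f}$.} It suffices to prove this for $f$ arbitrarily large, i.e.\ to show that the image of $G$ in $\SL(2,\Z_{p^k})$ contains the image of $\Gamma_{p^{2e}}$ for all $k$; equivalently, passing to the pro-$p$ completion, that the closure $\overline{G}$ in $\SL(2,\Z_p)$ contains $\Gamma_{p^{2e}}(\Z_p)$. The key computation is with commutators and products of the generators. From $A(p^eu)$ and $B(p^eu)$ one forms $[A(p^eu),B(p^eu)]$, whose off-diagonal and diagonal entries lie in $p^{2e}\Z_p$, and more precisely the $(1,2)$-entry is $-p^{2e}u^2 + (\text{higher order})$. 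Since $u$ is a unit, iterating such commutators and multiplying by powers of $A, B$ produces elements congruent mod $p^{2e+1}$ to $A(p^{2e}v)$, $B(p^{2e}v)$ and to the diagonal $\mathrm{diag}(1+p^{2e}w, \ldots)$ for units $v,w$. A standard pro-$p$ bootstrap — generate the "first layer'' $\Gamma_{p^{2e}}/\Gamma_{p^{2e+1}}$ as an $\F_p$-Lie algebra, then lift successively — then shows $\overline G \supseteq \Gamma_{p^{2e}}(\Z_p)$. I would phrase this using the $\mathfrak{sl}_2$ structure: the three elements $A(p^{2e}\,\cdot\,)$, $B(p^{2e}\,\cdot\,)$, $\mathrm{diag}(\ldots)$ span $\mathfrak{sl}_2(\F_p)$, which is perfect for $p\ge 5$, and one checks $p=2,3$ directly (for $p=2,3$ one can use that $\Gamma_{p^{2e}}/\Gamma_{p^{2e+2}}$ is generated appropriately, or invoke the results of \cite{SArithm} on levels away from $\{2,3,5\}$ together with a small finite check).

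\emph{The non-containment $\Gamma_{p^{2e-1}}\not\le G\hspace{1pt}\Gamma_{p^f}$ for $f\ge 2e+1$.} Here I would argue by a valuation/congruence obstruction. Every element of $G$ is a word $W$ in $A(p^eu)^{\pm1}, B(p^eu)^{\pm1}$; by the parametrization quoted from \cite{Charnow} in the torsion proposition above, such an element has the form $1_2 + p^e M$ with $M\in M_2(\Z_p)$, and moreover — expanding the word and tracking the leading term — the \emph{off-diagonal} entries of $M$ are themselves divisible by... no: rather, reducing $W$ mod $p^{2e}$ gives $1_2 + p^e\overline M$ where $\overline M$ is \emph{nilpotent}, in fact strictly upper or strictly lower triangular up to sign, because mod $p^{2e}$ the generators commute and each contributes only its linear term $\pm p^e u E_{12}$ or $\pm p^e u E_{21}$; summing these, $\overline M \equiv (\sum\alpha_i)\,u E_{12} + (\sum\beta_j)\,u E_{21} \pmod{p^e}$, which has zero diagonal. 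Multiplying by an element of $\Gamma_{p^f}$ with $f\ge 2e+1 > 2e$ does not change this class mod $p^{2e}$. But $\Gamma_{p^{2e-1}}$ contains, e.g., $\mathrm{diag}(1+p^{2e-1}, (1+p^{2e-1})^{-1})$, whose reduction mod $p^{2e}$ is $1_2 + p^{2e-1}\,\mathrm{diag}(1,-1)$; this is of the form $1_2+p^eN$ with $N \equiv p^{e-1}\mathrm{diag}(1,-1)\pmod{p^e}$ having \emph{nonzero} diagonal mod $p^e$ (using $e\ge 1$, so $e-1<e$). Hence it cannot lie in $G\hspace{1pt}\Gamma_{p^f}$.

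\emph{Main obstacle.} The delicate point is the first part — pinning down exactly that $p^{2e}$, and not a larger power, is achieved. The upper bound (non-containment of level $p^{2e-1}$) is a soft congruence argument, but the lower bound (that $G\hspace{1pt}\Gamma_{p^f}$ really reaches all the way down to $\Gamma_{p^{2e}}$) requires the Lie-algebra generation / pro-$p$ lifting argument to go through uniformly in $e$, and the small primes $p\in\{2,3\}$ (and the interaction with $5$) will need separate care, most cleanly by quoting the level results of \cite{SArithm} and dispatching the finitely many residual cases by direct computation. I would isolate the generation statement "$A(p^{2e}\cdot)$, $B(p^{2e}\cdot)$ and a suitable torus element lie in $G\hspace{1pt}\Gamma_{p^{2e+1}}$, and these generate $\Gamma_{p^{2e}}/\Gamma_{p^{f}}$'' as the technical core.
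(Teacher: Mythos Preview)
Your plan is essentially the paper's: reduce to the $p$-local picture (the paper does this by observing $\varphi_{p^f}(G)=\varphi_{p^f}(G(p^e))$, since $c/b$ is a unit mod $p^f$), show that $\varphi_{p^{2e}}(G)$ is the abelian group of matrices $1_2+sp^eE_{12}+tp^eE_{21}$ to obtain the non-containment, and for the containment exhibit generators of $\Gamma_{p^{2e}}/\Gamma_{p^{2e+1}}$ inside $\varphi_{p^{2e+1}}(G)$ and then climb to higher $f$. Your non-containment argument is correct and matches the paper's.

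Two places where you drift from the paper deserve correction. First, a computational slip: the $(1,2)$-entry of $[A(p^eu),B(p^eu)]$ is $\pm p^{3e}u^3$, not $-p^{2e}u^2$; the commutator contributes only the \emph{diagonal} generator $\mathrm{diag}(1+p^{2e}u^2,\,1-p^{2e}u^2)$ of the first layer. The off-diagonal generators $A(p^{2e}u)$ and $B(p^{2e}u)$ come simply from the $p^e$-th powers $A(p^eu)^{p^e}$ and $B(p^eu)^{p^e}$, exactly as in the paper.

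Second, and more substantively, the appeal to perfectness of $\mathfrak{sl}_2(\F_p)$ and the special pleading for $p\in\{2,3,5\}$ are unnecessary and, as written, would not work: Lie brackets of level-$p^{2e}$ elements land in level $p^{4e}$, not $p^{2e+1}$, so bracket-lifting does not climb the filtration. What does work, uniformly in $p$, is the $p$-th power map. Once the three elements above span $\Gamma_{p^{2e}}/\Gamma_{p^{2e+1}}\cong\mathfrak{sl}_2(\F_p)$ as an $\F_p$-\emph{vector space} (true for every $p$), the paper bootstraps by: if $v\in G$ with $v\equiv 1_2+p^{k-1}u\pmod{p^k}$ then $v^p\equiv 1_2+p^{k}u\pmod{p^{k+1}}$, valid for all primes since $k\ge 2e+1\ge 3$. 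Equivalently this is the Frattini argument for the pro-$p$ group $\Gamma_{p^{2e}}(\Z_p)$, whose Frattini subgroup is $\Gamma_{p^{2e+1}}(\Z_p)$. No case distinction and no citation of \cite{SArithm} is needed.
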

\begin{proof}
The lemma is trivially true if
$f\le 2e$.  
Suppose that $f> 2e$. 
Then $\Gamma_{p^f}\le\Gamma_{p^{2e+1}}
\le \Gamma_{p^{2e}}$.

Since $\gcd(p,b c)=1$, there 
is a positive integer $i$ such that 
$i\cdot\frac{c}{b}\equiv 1\bmod{p^f}$, 
so
\[
A(m)^i \equiv A(p^e) 
\quad 
\text{ and } \quad
B(m)^i \equiv B(p^e) \quad \bmod{p^f}.
\]
Hence $\varphi_{p^f}(G)
=\varphi_{p^f}(G(p^e))$.
We therefore prove the two 
assertions with $G(p^e)$ in 
place of $G$.

Setting $A(p^e)=x$ and $B(p^e)=y$,
we have
\[
[x,y]=
\begin{bmatrix}
1+p^{2e}+p^{4e}&p^{3e}\\
-p^{3e}&1-p^{2e}
\end{bmatrix}
\equiv
\begin{bmatrix}
1+p^{2e}&0\\
0&1-p^{2e}
\end{bmatrix} \quad 
\bmod{p^{2e+1}}.
\]
It follows that 
$U:=\varphi_{p^{2e}}(G(p^e))$ 
is abelian, consisting of the 
$p^{2e}$  distinct images
\[
\varphi_{p^{2e}}
\Big(
\begin{bmatrix}
1&sp^e\\
tp^{e}&1
\end{bmatrix}\Big)
\]
for $0\le s,t< p^e$.
But then $U$ does not contain
\[
\varphi_{p^{2e}}
\Big(
\begin{bmatrix}
1+p^{2e-1}&p^{2e-1}\\
-p^{2e-1}&1-p^{2e-1}
\end{bmatrix}
\Big).
\]
Thus
$\varphi_{p^{2e}}(\Gamma_{p^{2e-1}})
\not\le U$.

It remains to prove the first 
assertion, and this we do by
induction on $f$. For the base step
$f = 2e+1$, note that the images of 
$[x,y]$, $x^{p^e}$, $y^{p^e}$ under 
$\varphi_{p^{2e+1}}$ are 
\[
\begin{bmatrix}
1+p^{2e}&0\\
0&1-p^{2e}
\end{bmatrix} , \quad
\begin{bmatrix}
1&\ p^{2e}\\
0&\ 1
\end{bmatrix} ,  
\quad 
\begin{bmatrix}
1&\ 0\\
p^{2e}&\ 1
\end{bmatrix} .
\]
These generate the elementary 
abelian group
$\varphi_{p^{2e+1}}(\Gamma_{p^{2e}})$, 
whose elements are of the form
$1_2+p^{2e}u$
where $u$ has entries in 
$\{0,\ldots,p-1\}$
and $\trace(u)\equiv
\allowbreak  0\bmod p$.

Assume now that the statement is true 
for $f=k\geq 2e+1$.
Then $\varphi_{p^k}(\Gamma_{p^{k-1}})
\leq \varphi_{p^k}(G(p^e))$. 
Thus, for each
$1_2+p^{k-1} u\in \varphi_{p^k}(\Gamma_{p^{k-1}})$
where $u$ is a $\{0,\ldots,p-1\}$-matrix 
with zero trace modulo $p$,
there exist $v\in G(p^e)$ and some $w$
such that $v=1_2+p^{k-1} u+p^k w$.
Then $v^p\equiv 1_2+p^k u \allowbreak
\bmod{p^{k+1}}$,
which implies that
$\varphi_{p^{k+1}}(\Gamma_{p^k})\le 
\varphi_{p^{k+1}}(G(p^e))$. Hence 
$\Gamma_{p^{2e}} \leq 
\langle G(p^e), \Gamma_{p^{k+1}}\rangle$
by the inductive hypothesis.
\end{proof}
\begin{remark}
Lemma~\ref{AHlem} is valid for $b=1$.
\end{remark}

\begin{lemma}\label{OrderFormula}
In the notation of 
Lemma{\em ~\ref{AHlem}},
$\varphi_{p^{2e}}(G)\cong
 C_{p^e}\times C_{p^e}$.
Hence
\[
|\SL(2,\Z_{p^{2e}}):
 \varphi_{p^{2e}}(G)|=
p^{4e}- p^{4e-2}=
p^e \hspace{1pt}|\SL(2,\Z_{p^e})|.
\]
\end{lemma}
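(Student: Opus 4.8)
The plan is to identify $\varphi_{p^{2e}}(G)$ explicitly as an abstract group by a short computation with its generators, and then read off the index from the standard order formula for $\SL(2,\Z_{p^{2e}})$.

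First I would invoke the reduction used in the proof of Lemma~\ref{AHlem}: choosing a positive integer $i$ with $ic/b\equiv 1\bmod p^{2e}$ gives $A(m)^i\equiv A(p^e)$ and $B(m)^i\equiv B(p^e)\bmod p^{2e}$, hence $\varphi_{p^{2e}}(G)=\varphi_{p^{2e}}(G(p^e))=\langle x,y\rangle$ with $x:=\varphi_{p^{2e}}(A(p^e))$ and $y:=\varphi_{p^{2e}}(B(p^e))$. Now modulo $p^{2e}$ we have $A(p^e)=1_2+p^e E_{12}$ and $B(p^e)=1_2+p^e E_{21}$, where $E_{ij}$ is the matrix unit; since $p^{2e}\equiv 0$, the product of any powers collapses to $x^s y^t\equiv 1_2+sp^e E_{12}+tp^e E_{21}$. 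Thus $x$ and $y$ commute, each has order exactly $p^e$, and $\langle x\rangle\cap\langle y\rangle=1$ (compare off-diagonal entries). Therefore $\varphi_{p^{2e}}(G)\cong C_{p^e}\times C_{p^e}$; equivalently it is the set of $p^{2e}$ distinct images of $\begin{bmatrix}1 & sp^e\\ tp^e & 1\end{bmatrix}$, $0\le s,t<p^e$, already singled out in the proof of Lemma~\ref{AHlem}.

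The index statement now follows from $|\SL(2,\Z_{p^k})|=p^{3k}-p^{3k-2}$. With $k=2e$ this gives $|\SL(2,\Z_{p^{2e}})|=p^{6e}-p^{6e-2}$, so dividing by $|\varphi_{p^{2e}}(G)|=p^{2e}$ yields $p^{4e}-p^{4e-2}$; and $p^{4e}-p^{4e-2}=p^e(p^{3e}-p^{3e-2})=p^e\,|\SL(2,\Z_{p^e})|$. I do not anticipate any real difficulty: the whole argument is an explicit $2\times 2$ computation modulo $p^{2e}$ together with a textbook cardinality. The one place calling for a little care is verifying $\langle x\rangle\cap\langle y\rangle=1$, which is what pins the order at exactly $p^{2e}$ and rules out $\varphi_{p^{2e}}(G)$ being a proper quotient of $C_{p^e}\times C_{p^e}$.
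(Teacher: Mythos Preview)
Your argument is correct and essentially identical to the paper's: the paper simply points back to the proof of Lemma~\ref{AHlem}, where the abelian group $U=\varphi_{p^{2e}}(G(p^e))$ of order $p^{2e}$ was already exhibited as the $p^{2e}$ images of $\begin{bmatrix}1 & sp^e\\ tp^e & 1\end{bmatrix}$, and then invokes the order formula $|\SL(n,\Z_{p^k})|=p^{(n^2-1)(k-1)}|\SL(n,p)|$. Your write-up just makes the verification that $U\cong C_{p^e}\times C_{p^e}$ a bit more explicit.
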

\begin{proof}
We observed that 
$\varphi_{p^{2e}}(G)=
 \varphi_{p^{2e}}(G(p^e))\cong
 C_{p^e}\times C_{p^e}$ in the 
proof of Lemma~\ref{AHlem}.
Also, $|\SL(n,\Z_{p^k})| = 
p^{(n^2-1)(k-1)} 
 \hspace{.5pt} |\SL(n,p)|$.
\end{proof}

We gather together various observations about the 
structure of $\SL(2,\Z_{p^e})$, $p$ prime,
that are
needed in the subsequent proof.
\begin{lemma} \label{MiscSL2}\
\begin{itemize}
\item[{\rm (i)}] Each proper normal subgroup of 
$\SL(2,\Z_{p^e})$ has index divisible by $p$.
\item[{\rm (ii)}] 
Let $l>1$ be an integer with prime 
factorization $l=p_1^{e_1}\cdots p_k^{e_k}$. 
Let $K$ be a subgroup of $\SL(2,\Z_l)$ 
such that $\varphi_{p_i^{e_i}}(K)= \SL(2,\Z_{p^{e_i}})$
for all $i$, $1\leq i\leq k$. Then $K=\SL(2,\Z_l)$.
\end{itemize}
\end{lemma}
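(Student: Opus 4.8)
The plan is to deduce (i) from the normal subgroup structure of $\SL(2,p)$, and to prove (ii) by induction on $k$, peeling off one prime power at a time via Goursat's lemma and (i). For (i), suppose for a contradiction that $N\trianglelefteq\SL(2,\Z_{p^e})$ is proper with $|\SL(2,\Z_{p^e}):N|$ coprime to $p$. The kernel of the reduction $\varphi_p\colon\SL(2,\Z_{p^e})\to\SL(2,p)$ is a normal $p$-subgroup of order $p^{3(e-1)}$; since this order is coprime to $|\SL(2,\Z_{p^e}):N|$, the kernel lies in $N$, so $N$ induces a proper normal subgroup of $\SL(2,p)$ of index prime to $p$. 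I would then contradict this by inspection: for $p\ge5$ the only proper normal subgroups of $\SL(2,p)$ are $1$ and $\{\pm1_2\}$, of indices $|\SL(2,p)|$ and $|\PSL(2,p)|$; for $p=3$ they are $1$, $\{\pm1_2\}$, $Q_8$, of indices $24$, $12$, $3$; and $\SL(2,2)\cong S_3$ has proper normal subgroups $1$ and $C_3$, of indices $6$ and $2$. In every case $p$ divides the index.

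For (ii), the Chinese Remainder Theorem gives $\SL(2,\Z_l)\cong S_1\times\cdots\times S_k$ with $S_i=\SL(2,\Z_{p_i^{e_i}})$, the map $\varphi_{p_i^{e_i}}$ being the $i$-th projection, so $K$ is a subdirect product. I would induct on $k$, the case $k=1$ being the hypothesis. For $k\ge2$, order the primes so that $p_1$ is the largest and put $T=S_2\times\cdots\times S_k$. Since each $\varphi_{p_i^{e_i}}$ with $i\ge2$ factors through the projection onto $T$, that projection maps $K$ onto every $S_i$, hence onto $T$ by the inductive hypothesis; and $\varphi_{p_1^{e_1}}$ maps $K$ onto $S_1$. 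Thus $K$ is a subdirect product of $S_1\times T$, so Goursat's lemma provides $E\trianglelefteq S_1$ and $D\trianglelefteq T$ with $S_1/E\cong T/D=:Q$, and $K=S_1\times T$ precisely when $Q=1$. By (i) applied to $S_1/E\cong Q$, either $Q=1$ (and we are done) or $p_1\mid|Q|$.

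The main obstacle is ruling out $p_1\mid|Q|$ — equivalently, that $S_1$ and $T$ share a nontrivial quotient. If $p_1\ge5$ this is immediate: since $p_1$ and $p_1-1$ are not both prime, no $p_i$ with $i\ge2$ is $\equiv\pm1\bmod p_1$, so $p_1\nmid p_i^2-1$ and hence $p_1\nmid|T|=\prod_{i\ge2}p_i^{3e_i-2}(p_i^2-1)$, contradicting $p_1\mid|Q|$ since $|Q|$ divides $|T|$. If $p_1\le3$ then, as $k\ge2$, necessarily $l=2^a3^b$ with $p_1=3$, $S_1=\SL(2,\Z_{3^b})$ and $T=\SL(2,\Z_{2^a})$; here I would compare abelianizations. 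The surjection $\SL(2,\Z_{2^a})\to\SL(2,2)\cong S_3$ carries the derived subgroup onto $[S_3,S_3]\cong C_3$, whose order is the full $3$-part of $|\SL(2,\Z_{2^a})|=2^{3a-2}\cdot3$; hence that derived subgroup already contains a Sylow $3$-subgroup, so $\SL(2,\Z_{2^a})$ has $2$-group abelianization. Symmetrically, $[\SL(2,3),\SL(2,3)]\cong Q_8$ has order the full $2$-part of $|\SL(2,\Z_{3^b})|$, so $\SL(2,\Z_{3^b})$ has $3$-group abelianization. Since $Q$ is a quotient of the solvable group $\SL(2,\Z_{3^b})$ it is solvable, so if $Q\ne1$ its abelianization is a nontrivial quotient of both $\SL(2,\Z_{3^b})^{\mathrm{ab}}$ (a $3$-group) and $\SL(2,\Z_{2^a})^{\mathrm{ab}}$ (a $2$-group), which is impossible. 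Therefore $Q=1$, closing the induction. I expect the handling of $Q$ in this last $\{2,3\}$-case, where cardinalities alone are inconclusive, to be the delicate point.
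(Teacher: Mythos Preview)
Your proof is correct and follows the same strategy as the paper's: reduce (i) to the normal subgroup structure of $\SL(2,p)$, and prove (ii) by induction on the number of prime factors, peeling off the largest prime and invoking (i) on the resulting subdirect product via Goursat. You are in fact more careful than the paper at the one delicate point: when $p_1=3$ and $l=2^a3^b$ the bare divisibility argument fails (since $3\mid|\SL(2,\Z_{2^a})|$), and your abelianization comparison is exactly what is needed to close the induction, whereas the paper compresses this into the single assertion that ``(i) forces $K$ to be the full direct product''.
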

\begin{proof}
(i) \hspace{1pt} 
A counterexample would arise from a 
proper normal subgroup of the quotient $\PSL(2,p)$.
Hence the statement is clear for $p\geq 5$, and 
it follows for $p\in \{2, 3\}$ by inspection.

(ii) \hspace{1pt} We proceed by induction on $k$, the
base step being trivial. 
Write $l=rp^e$ where $p$ is the largest prime divisor 
of $l$ and $\mathrm{gcd}(p,r)=1$. Then 
$\SL(2,\Z_l) \cong \SL(2,\Z_{p^e})\times \SL(2,\Z_r)$;
so $K$ is a subdirect product of 
$\varphi_{p^e}(K)= \SL(2,\Z_{p^e})$ and $\varphi_r(K)$.
The inductive hypothesis gives $\varphi_r(K)= \SL(2,\Z_r)$.
Then (i) forces
$G$ to be the full direct product 
$\SL(2,\Z_{p^e})\times \SL(2,\Z_r)$. This completes the
 proof by induction.
\end{proof}

The next result
 is the main one of this section, and
it facilitates our experiments in the final section. 
\begin{theorem}\label{Conjecture}\
\begin{itemize}
\item[{\rm (i)}]
$\cl(G)= G\hspace{1pt} \Gamma_{a^2}$
has level $a^2$. 
\item[{\rm (ii)}]
$|\Gamma:\cl(G)|=a\cdot|\SL(2,\Z_a)|$.
\end{itemize}
\end{theorem}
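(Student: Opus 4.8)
The plan is to prove part (i) first, then deduce (ii) as an easy consequence. Write $a = p_1^{e_1}\cdots p_k^{e_k}$ for the prime factorization of $a$. By Lemma~\ref{PrimesForDense} we have $\Pi(G)=\pi(a)$, so for primes $p\nmid ab$ the group $G$ surjects onto $\SL(2,p)$, and hence (Remark~\ref{SurjectionEquivalence}) onto $\SL(2,\Z_{p^k})$ for all $k$. Thus the level $l$ of $G$ — i.e.\ the level of $\cl(G)$ — is divisible only by primes in $\pi(a)$, and $\cl(G)$ already contains $\Gamma_{p^k}$ in the relevant coordinates for every prime $p\nmid a$. The real content is at the primes $p\mid a$, and this is exactly what Lemma~\ref{AHlem} controls: for $a=p^e c$ with $\gcd(p,bc)=1$, we have $\Gamma_{p^{2e}}\le G\,\Gamma_{p^f}$ for every $f\ge 0$, but $\Gamma_{p^{2e-1}}\not\le G\,\Gamma_{p^f}$ once $f\ge 2e+1$. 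Taken over all $p_i\mid a$ and combined via the Chinese Remainder Theorem (recall $\SL(2,\Z_{a^2})\cong\prod_i\SL(2,\Z_{p_i^{2e_i}})$), these inclusions say precisely that $\Gamma_{a^2}\le G\,\Gamma_N$ for every $N$, while $\Gamma_{a^2/p_i}\not\le G\,\Gamma_N$ for suitable $N$.

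Next I would argue that $\cl(G)=G\,\Gamma_{a^2}$. One inclusion is immediate once we know $G\,\Gamma_{a^2}$ is a subgroup of finite index: it contains $G$ and contains the PCS $\Gamma_{a^2}$, so it is $S$-arithmetic and therefore contains $\cl(G)$ by definition of the arithmetic closure. For the reverse inclusion, since $\cl(G)$ is $S$-arithmetic it contains a PCS $\Gamma_N$, and enlarging $N$ we may take $a^2\mid N$. Then $G\,\Gamma_N\le\cl(G)$. By Lemma~\ref{AHlem} (applied prime-by-prime and reassembled with CRT), $\Gamma_{a^2}\le G\,\Gamma_N$, hence $G\,\Gamma_{a^2}\le G\,\Gamma_N\le\cl(G)$. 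That $G\,\Gamma_{a^2}$ is indeed a group — not merely a set — follows because $\Gamma_{a^2}\trianglelefteq\Gamma$, so the product of the subgroup $G$ with the normal subgroup $\Gamma_{a^2}$ is a subgroup. To pin down that the level of $\cl(G)=G\,\Gamma_{a^2}$ is exactly $a^2$ and not a proper divisor, I use the negative half of Lemma~\ref{AHlem}: if the level were $a^2/p_i$ for some $i$, then $\Gamma_{a^2/p_i}\le G\,\Gamma_{a^2}$, contradicting $\Gamma_{p_i^{2e_i-1}}\not\le G\,\Gamma_{p_i^f}$ for large $f$ after projecting to the $p_i$-component. (Here one also invokes Lemma~\ref{MiscSL2}(ii) to see that controlling each local component controls the level globally.)

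For part (ii), the index $|\Gamma:\cl(G)|=|\Gamma:G\,\Gamma_{a^2}|$ equals $|\SL(2,\Z_{a^2}):\varphi_{a^2}(G)|$, since $\Gamma_{a^2}=\ker\varphi_{a^2}$. Now $\varphi_{a^2}(G)=\prod_i\varphi_{p_i^{2e_i}}(G)$ under the CRT decomposition, and Lemma~\ref{OrderFormula} gives $|\SL(2,\Z_{p_i^{2e_i}}):\varphi_{p_i^{2e_i}}(G)|=p_i^{e_i}\,|\SL(2,\Z_{p_i^{e_i}})|$. Multiplying over $i$ and using multiplicativity of $|\SL(2,\Z_n)|$ in $n$ (coprime factors) yields $|\Gamma:\cl(G)| = \big(\prod_i p_i^{e_i}\big)\big(\prod_i|\SL(2,\Z_{p_i^{e_i}})|\big) = a\cdot|\SL(2,\Z_a)|$, as claimed.

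The main obstacle is the passage from the purely local statements of Lemma~\ref{AHlem} to a clean global identification of $\cl(G)$: one must be careful that $\Gamma_{a^2}\le G\,\Gamma_N$ really does follow from the per-prime inclusions $\Gamma_{p_i^{2e_i}}\le G\,\Gamma_{p_i^{f_i}}$, which requires that the various congruence conditions are independent (CRT) and that no interaction between different primes can obstruct membership — this is where Lemma~\ref{MiscSL2} does the bookkeeping, ensuring subdirect products of full special linear groups over coprime moduli are full. Everything else is a matter of assembling Lemmas~\ref{AHlem}, \ref{OrderFormula}, and \ref{MiscSL2} and tracking indices.
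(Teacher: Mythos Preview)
Your proposal is correct and follows essentially the same route as the paper: both arguments hinge on Lemma~\ref{AHlem} for the local analysis at each prime $p\mid a$, on Lemma~\ref{MiscSL2} (together with Lemma~\ref{PrimesForDense} and Remark~\ref{SurjectionEquivalence}) to handle primes outside $\pi(a)$ and to pass from local to global via Goursat/CRT, and on Lemma~\ref{OrderFormula} for part~(ii). The only organizational difference is that the paper first shows $a^2\mid l$, then $\pi(l)=\pi(a)$, then $l\le a^2$, whereas you establish $\cl(G)=G\,\Gamma_{a^2}$ directly by double inclusion and then pin down the level; in either ordering the substantive step is the one you correctly flag as the main obstacle, namely that the per-prime containments $\Gamma_{p_i^{2e_i}}\le G\,\Gamma_{p_i^{f_i}}$ reassemble to $\Gamma_{a^2}\le G\,\Gamma_N$, which works because $\varphi_{p_i^{f_i}}(G)$ is a $p_i$-group (as $G\le\Gamma_a$) so the subdirect product $\varphi_N(G)$ splits as a genuine direct product over the primes.
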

\begin{proof}
(i) \hspace{1pt} Let $l$ be the level of $G$.
If $p^e>1$ is 
the largest power of the prime
$p$ dividing $a$,  then 
$G \hspace{1pt}\Gamma_{p^{2e}}$ 
has level $p^{2e}$ by Lemma~\ref{AHlem}. 
Since $H:=\cl(G)= G\hspace{1pt}\Gamma_l \leq 
G \hspace{1pt}\Gamma_{p^{2e}}$ 
and therefore $\Gamma_l\leq \Gamma_{p^{2e}}$, 
we see that $l$ is divisible by $p^{2e}$. 
Thus $a^2$ divides $l$. 

Next we prove $\pi(l)\subseteq \pi(a)$, 
i.e., $\pi(l) = \pi(a)$. 
To this end, suppose that 
$l=rk$ where $r>1$, $\pi(k) = \pi(a)$,
 and $\mathrm{gcd}(r,a)=1$.
By Lemma~\ref{PrimesForDense}, 
Remark~\ref{SurjectionEquivalence}, 
 and Lemma~\ref{MiscSL2},
 $\varphi_{r}(G)= \SL(2,\Z_{r})$.
Since 
$\SL(2,\Z_l)\cong \SL(2,\Z_{r})\times 
\SL(2,\Z_k)$,  
it follows that $\varphi_l(H)= \varphi_l(G)$ 
is a subdirect product 
of  $\SL(2,\Z_{r})$ and 
 $\varphi_k(H)$.
Each proper quotient of $\SL(2,\Z_{r})$
has order divisible by a prime in $\pi(r)$, whereas
 $\varphi_k(H)$ has order divisible only by the 
primes in $\pi(a)$. Thus $\varphi_l(H) \cong 
\SL(2,\Z_{r}) \times \varphi_k(H)$, 
and as a consequence $\Gamma_k \leq H$. But
$\Gamma_l$ is the PCS of least level in $H$.

We have now proved that $\pi(l)= \pi(a)$.
Therefore $\varphi_l(H)$ is a direct 
product of $p$-groups $\varphi_{p^f}(H)$ 
for $p$ ranging over
$\pi(a)$. Suppose that $l>a^2$; 
say $p^f$ divides $l$
where $f>2e$ and $p^e$ is the largest power of $p$
dividing $a$. 
We infer from $\Gamma_{p^{2e}} \leq 
G\hspace{1pt}\Gamma_{p^f}$ 
 that $\varphi_{l}(\Gamma_{l/p^{f-2e}}) \leq
\varphi_{l}(H)$, so $\Gamma_{l/p^{f-2e}} \leq H$:
 contradiction. Hence $l = a^2$.
 
(ii) \hspace{1pt} This follows from
Lemma~\ref{OrderFormula}.
\end{proof}

\begin{corollary}\label{CongQuot}
$\cl(G)/\Gamma_{a^2}\cong C_a\times
C_a$.
\end{corollary}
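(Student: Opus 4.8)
The plan is to read off the quotient $\cl(G)/\Gamma_{a^2}$ directly from Theorem~\ref{Conjecture}(i) together with the computations already carried out in the proof of Lemma~\ref{AHlem}. Since $\cl(G) = G\hspace{1pt}\Gamma_{a^2}$, we have
\[
\cl(G)/\Gamma_{a^2} = G\hspace{1pt}\Gamma_{a^2}/\Gamma_{a^2} \cong \varphi_{a^2}(G),
\]
so it suffices to identify $\varphi_{a^2}(G)$. First I would invoke the Chinese Remainder decomposition $\SL(2,\Z_{a^2}) \cong \prod_{p\in\pi(a)} \SL(2,\Z_{p^{2e_p}})$, where $p^{e_p}$ is the exact power of $p$ dividing $a$; this restricts to a decomposition $\varphi_{a^2}(G) \cong \prod_{p\in\pi(a)} \varphi_{p^{2e_p}}(G)$ once one knows the projection of $\varphi_{a^2}(G)$ to each factor is all of $\varphi_{p^{2e_p}}(G)$, which is immediate since the CRT isomorphism is compatible with the congruence homomorphisms.

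Next I would apply Lemma~\ref{OrderFormula}, which states $\varphi_{p^{2e}}(G) \cong C_{p^e}\times C_{p^e}$ for each prime power $p^e \,\|\, a$. Assembling these over all $p\in\pi(a)$ gives
\[
\varphi_{a^2}(G) \cong \prod_{p\in\pi(a)} \big(C_{p^{e_p}}\times C_{p^{e_p}}\big)
\cong \Big(\prod_{p\in\pi(a)} C_{p^{e_p}}\Big) \times \Big(\prod_{p\in\pi(a)} C_{p^{e_p}}\Big)
\cong C_a\times C_a,
\]
the last step being the elementary fact that a product of cyclic groups of pairwise coprime prime-power orders is cyclic of order equal to the product. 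Combining with the displayed isomorphism $\cl(G)/\Gamma_{a^2}\cong\varphi_{a^2}(G)$ finishes the argument.

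I do not anticipate a real obstacle here: the corollary is essentially a packaging of Lemma~\ref{OrderFormula} across the primes dividing $a$, and the only thing that needs care is the bookkeeping ensuring that the abelian groups $\varphi_{p^{2e_p}}(G)$ genuinely assemble into a direct product inside $\SL(2,\Z_{a^2})$ — but this is guaranteed by the ring isomorphism $\Z_{a^2}\cong\prod_p\Z_{p^{2e_p}}$ and does not require an analogue of Lemma~\ref{MiscSL2}(ii), since we are not claiming surjectivity onto the factors but merely compatibility of the projections.
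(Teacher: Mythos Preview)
Your approach is correct and matches the paper's intent: the corollary is stated without proof, as an immediate consequence of Theorem~\ref{Conjecture}(i) and Lemma~\ref{OrderFormula}, and your argument unpacks exactly that.

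There is one imprecision worth tightening. You correctly flag that the step requiring care is why $\varphi_{a^2}(G)$ equals the \emph{full} direct product $\prod_{p\in\pi(a)}\varphi_{p^{2e_p}}(G)$ rather than merely a subdirect product. However, your stated justification --- ``this is guaranteed by the ring isomorphism $\Z_{a^2}\cong\prod_p\Z_{p^{2e_p}}$'' and ``compatibility of the projections'' --- does not by itself close the gap: the CRT isomorphism only tells you that $\varphi_{a^2}(G)$ embeds in the product with surjective projections, i.e., it is subdirect. The actual reason the subdirect product is full is that, by Lemma~\ref{OrderFormula}, each factor $\varphi_{p^{2e_p}}(G)$ is a $p$-group, and the primes $p$ are distinct; a subdirect product of finite groups of pairwise coprime orders is always the full direct product (each $|\varphi_{p^{2e_p}}(G)|$ divides $|\varphi_{a^2}(G)|$, and coprimality forces their product to divide as well). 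So in fact you \emph{do} use a coprimality argument in the spirit of Lemma~\ref{MiscSL2}, just a much lighter one. With that sentence inserted, the proof is complete.
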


The proof of Theorem~\ref{Conjecture}
is independent of the denominator $b$,  
so we have proved additionally that 
$G(a)\leq \SL(2,\Z)$  has level 
$a^2$. 
Cf.~\cite[Proposition~1.12]{Arithm}:
for $n\geq 3$, the `elementary group'
in $\SL(n,\Z)$ generated by all 
matrices $1_n+me_{ij}$ with $i\neq j$
($e_{ij}$ has $1$ in position $(i,j)$ and
zeros elsewhere) contains the PCS of 
level $m^2$.  

In line with \cite{SuryVenky}
(and cf.~\cite[Proposition~1.10]{Arithm}),
one can show that $\Gamma_{a^2}$
is generated by 
\[
 A(am), \hspace{3pt}
 B(am), \hspace{3pt}
 B(am)^x, \hspace{4pt}
\mbox{where} \hspace{4pt}
x = {\scriptsize
\begin{bmatrix}
-1&1 \ \\
\phantom{-}0& 1 \
\end{bmatrix}}.
\]
Also, at least when $b$ is prime,
$\Gamma_{a^2}$ is the normal closure  
$\langle A(am)\rangle^{\Gamma}=
\langle B(am)\rangle^\Gamma$ of 
$G(am)$ in $\Gamma$ 
(this implies 
the  CSP; see
\cite{Mennicke}).
Thus, we get an explicit 
generating set of $\cl(G)$. 
 
Theorem~\ref{Conjecture} and 
Corollary~\ref{CongQuot} afford 
 further insights. Lifting up 
to $G$ from a presentation of 
$\varphi_{a^2}(G)\cong C_a \times C_a$ 
by the `normal generators' technique 
(see, e.g.,
\cite[$\S \hspace{1pt} 3.2$]{Tits})
yields that $G\cap \Gamma_{a^2}$ 
is the $G$-normal closure of
\[
\langle \hspace{.5pt} A(am),
\hspace{.5pt} B(am), 
\hspace{.5pt}[A(m),B(m)]
\hspace{.5pt} \rangle .
\]

In summary, using the notation 
$\leq_{\mathrm f}$, $<_{\infty}$
to denote a subgroup of finite or 
infinite index, respectively:
\begin{enumerate}
\item if $G$ is thin then
 $G(am) \leq G 
<_{\infty} \mathrm{cl}(G) 
\leq_{\mathrm f}\Gamma_a$;
\item if $G$ is $S$-arithmetic then
$G(am) \leq \Gamma_{a^2} 
\leq_{\mathrm f} G 
\leq_{\mathrm f} \Gamma_a$;
\item $G$ is $S$-arithmetic 
if and only if 
$\langle A(am)\rangle^{\Gamma} 
\leq  G$.
\end{enumerate} 

\vspace{2pt}

When $G$ is $S$-arithmetic,
an extra benefit of Theorem~\ref{Conjecture}
is that it enables us to test membership of 
$g \in \Gamma$ in $G$.
The idea is obvious:
$g\in G$ if and only if 
$\varphi_{a^2}(g)\in \varphi_{a^2}(G)$.
More generally, we could get a negative 
answer to the question of whether $g$ is in 
$G$ by testing membership 
 of $g$ in $\mathrm{cl}(G)$. 

\section{Experimentation}
\label{Exp}

\subsection{Computing a presentation 
of $\SL(2, R)$}\label{Pres}

A key requirement for the  
experiments, which feature 
 Todd--Coxeter coset enumeration,
 is a presentation of $\Gamma$.
Our algorithm to compute one
 is based on the next theorem, 
due to Ihara~\cite[Corollary~2, p.~80]{Serre}. 
\begin{theorem}\label{Ihara}
For any prime $p$, 
\[
\SL(2, \Z[\textstyle 1/p])\cong 
\SL(2, \Z) *_{\Gamma_0(p)}\SL(2, \Z),
\]
the free product with amalgamation 
$\Gamma_0(p)$, where $\Gamma_0(p)$
 comprises all
matrices in $\SL(2, \Z)$ 
that are upper triangular modulo $p$.
\end{theorem}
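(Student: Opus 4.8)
The plan is to realize $\Gamma := \SL(2,\Z[1/p])$ as a group acting on the Bruhat--Tits tree of $\SL(2,\Q_p)$, $\Q_p$ the field of $p$-adic numbers, and then to apply Bass--Serre theory. First I would recall the tree $T = T_p$: its vertices are the homothety classes $[L]$ of $\Z_p$-lattices $L$ in $\Q_p^2$, where $\Z_p$ is the ring of $p$-adic integers, two classes being joined by an edge exactly when they admit representatives with $L \supsetneq L'$ and $L/L' \cong \F_p$. This graph is a $(p+1)$-regular tree, and by \cite{Serre} the natural action of $\SL(2,\Q_p)$ on $T$ is without inversions, has precisely two vertex orbits --- separated by a $\Z/2\Z$-valued ``type'' (the parity of the valuation of the determinant of a transition matrix between lattices) --- and has a single edge $e_0$, joining $v_0 = [\Z_p^2]$ to $v_1 = [\Z_p \oplus p\Z_p]$, as fundamental domain. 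Here the $\SL(2,\Q_p)$-stabilizer of $v_0$ is the maximal compact subgroup $\SL(2,\Z_p)$, that of $v_1$ is $\mathrm{diag}(1,p)\,\SL(2,\Z_p)\,\mathrm{diag}(1,p)^{-1}$, and that of $e_0$ is their intersection (the Iwahori subgroup).

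Next I would embed $\Gamma$ diagonally in $\SL(2,\Q_p)$ through $\Z[1/p] \hookrightarrow \Q_p$ and restrict the action. The decisive point is that $e_0$ remains a fundamental domain for $\Gamma$. To see this, note that $\Gamma$ is \emph{dense} in $\SL(2,\Q_p)$: it contains every elementary matrix with entry in $\Z[1/p]$, and since $\Z[1/p]$ is dense in $\Q_p$ and elementary matrices generate $\SL(2,F)$ over any field $F$, these topologically generate $\SL(2,\Q_p)$. A dense subgroup meets every coset of an open subgroup, so $\Gamma \cdot \mathrm{Stab}_{\SL(2,\Q_p)}(v_i) = \SL(2,\Q_p)$ for $i = 0, 1$ (both stabilizers being open); hence $\Gamma$ is already transitive on each of the two vertex orbits of $\SL(2,\Q_p)$. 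Since $\Gamma \cap \SL(2,\Z_p) = \SL(2,\Z)$ (because $\Z[1/p]\cap\Z_p = \Z$ inside $\Q_p$), the stabilizer $\mathrm{Stab}_\Gamma(v_0) = \SL(2,\Z)$ surjects onto $\SL(2,\F_p)$, which permutes the $p+1$ lines of $\F_p^2$ --- i.e.\ the $p+1$ neighbours of $v_0$ --- transitively; combined with the vertex-transitivity above, this makes $\Gamma$ transitive on edges. As ``type'' is invariant under $\SL$, the vertices $v_0, v_1$ lie in distinct $\Gamma$-orbits and there are no inversions, so $\Gamma \backslash T$ is the single edge $e_0$.

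It then remains to identify the $\Gamma$-stabilizers of $v_0$, $v_1$, $e_0$. We have just seen $\mathrm{Stab}_\Gamma(v_0) = \SL(2,\Z)$. For $v_1$, conjugating by $\mathrm{diag}(1,p)$ and using $\Z[1/p] \cap p^{\pm1}\Z_p = p^{\pm1}\Z$, a direct entrywise check gives
$\mathrm{Stab}_\Gamma(v_1) = \Gamma \cap \mathrm{diag}(1,p)\SL(2,\Z_p)\mathrm{diag}(1,p)^{-1} = \mathrm{diag}(1,p)\,\SL(2,\Z)\,\mathrm{diag}(1,p)^{-1} \cong \SL(2,\Z)$.
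Intersecting these two, $\mathrm{Stab}_\Gamma(e_0)$ consists of the matrices in $\SL(2,\Z)$ whose lower-left entry is divisible by $p$, i.e.\ $\mathrm{Stab}_\Gamma(e_0) = \Gamma_0(p)$. Finally, the structure theorem for a group acting without inversion on a tree with a single edge as fundamental domain (\cite{Serre}) yields $\Gamma = \mathrm{Stab}_\Gamma(v_0) *_{\mathrm{Stab}_\Gamma(e_0)} \mathrm{Stab}_\Gamma(v_1) \cong \SL(2,\Z) *_{\Gamma_0(p)} \SL(2,\Z)$.

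The main obstacle is the second step: showing that the $\Gamma$-action has exactly one edge as fundamental domain, equivalently that $\Gamma$ is transitive on each vertex class. This is where some form of approximation --- density of $\Z[1/p]$ in $\Q_p$, i.e.\ strong approximation for $\SL_2$ --- is genuinely needed; without it the quotient graph could be larger and the amalgam decomposition would fail. By contrast, the tree structure, the $p$-adic stabilizer computations, and the passage from a one-edge quotient to a free product with amalgamation are all routine given Serre's theory of groups acting on trees; indeed this argument reproduces Serre's own derivation of Ihara's theorem.
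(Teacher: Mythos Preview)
Your argument is correct and is precisely Serre's derivation of Ihara's theorem via the action on the Bruhat--Tits tree. The paper itself does not supply a proof of this statement at all: it merely records the result as Ihara's theorem and gives the citation \cite[Corollary~2, p.~80]{Serre}, then uses it as a black box to build presentations of $\SL(2,\Z[1/b])$. So there is no ``paper's own proof'' to compare against; your proposal fills in exactly the argument that the cited reference contains, and the two are in complete agreement.
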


Theorem~\ref{Ihara} can be expanded 
iteratively to $\SL(2, \Z[\textstyle 1/b])$
for composite $b$~\cite[p.~80]{Serre}.
If $k$ is an integer not divisible by
 $p$, then  $\SL(2, \Z[1/pk])$
is the amalgamated free product of two 
copies of $\SL(2, \Z[1/k])$,
 with the amalgamated subgroup as before 
comprising all mod-$p$ upper triangular matrices.

We apply Theorem~\ref{Ihara}
in standard fashion (see
\cite[pp.~80--81]{Serre}), 
taking the presentations 
\[
\langle \hspace{.5pt} s, t \mid 
 s^4 = 1, \hspace{.5pt} (st)^3=s^2 
\hspace{.5pt} \rangle \quad
\mbox{and} \quad 
\langle \hspace{.5pt} x_p, y_p \mid 
 x_p^4 = 1, \hspace{.5pt} 
(x_py_p)^3=x_p^2 \hspace{.5pt} \rangle
\] 
of $\SL(2,\Z)$, where
\[
s=
{\scriptsize \begin{bmatrix}
\phantom{-}0&1\\
-1&0\\
\end{bmatrix}}, \quad
t= 
{\scriptsize \begin{bmatrix}
1&0\\
1&1\\
\end{bmatrix}}, \quad
x_p =
{\scriptsize \begin{bmatrix}
\phantom{-}0&\frac{1}{p}\\
& \vspace{-6.75pt}\\
-p&0\\
\end{bmatrix}}, \quad 
y_p =
{\scriptsize \begin{bmatrix}
1&\! -\frac{1}{p}\\
& \vspace{-6.75pt}\\
0&\! \phantom{-}1\\
\end{bmatrix}}.
\]
We construct
a generating set for 
$\Gamma_0(p)$ whose elements are
 Schreier generators for the 
stabilizer of the subspace 
$\langle (1,0)^\top\rangle
\subseteq \F_p^2$ under the 
natural action of $\SL(2,\Z)$ 
modulo $p$. 
It is easily seen that
$|\SL(2, \Z) : \Gamma_0(p)| = p+1$.
Suppose that $\Gamma_0(p)$ is given by
a generating set of matrices $w_i = w_i(x,y)$
that are words in $x=x_p$ and $y=y_p$.  
Utilizing the {\sf GAP} package 
ModularGroup~\cite{GAPModularGroup},
we express each $w_i$ as a word 
$w_i'(s,t)$ in $s$ and $t$.
Then $\SL(2, \Z[1/p])$  
has presentation 
\begin{align*}
\langle \hspace{.5pt} s,t,x,y \ | &  \
s^4=1 , \hspace{.5pt} (st)^3=s^2, 
\hspace{.5pt} 
x^4=1, \hspace{.5pt} (xy)^3=x^2,\\
 &  \ 
w_1(x,y)=w_1'(s,t), 
\ldots , w_{k}(x,y)=w_{k}'(s,t) \rangle 
\end{align*}
for some $k\leq p+2$;
of course, this may simplify. 
However, eliminating redundancy
is usually not worthwhile, because 
it comes at the cost of longer words. 

Below are a few examples that illustrate the 
method.

\vspace{5pt}

\noindent \textbf{$\bullet$} \,
 Let $p=5$. Then  $\Gamma_0(p)$ 
is generated by
\[
x^{-2}, xyx^{-1}, y^{5}, y^{2}xy^{-2}, yx^{-1}y .
\] 
Actually, the fifth listed generator is 
 redundant. Now
\[
x^{-2} =
{\scriptsize \begin{bmatrix}
-1&\! \phantom{-}0\\ 
\phantom{-}0&\! -1\\ 
\end{bmatrix}}, \quad
xyx^{-1} =
{\scriptsize \begin{bmatrix}
1&0\\ 
5&1\\ 
\end{bmatrix}}, \quad
y^{5} = {\scriptsize \begin{bmatrix}
1&\! -1\\ 
0&\! \phantom{-}1\\ 
\end{bmatrix}},\quad
y^{2}xy^{-2} =
{\scriptsize \begin{bmatrix}
\phantom{-}2&\! \phantom{-}1\\ 
-5&\! -2\\ 
\end{bmatrix}}.
\]
The corresponding words 
$w'$ are $s^{-2},t^5, (tst)^{-1},
t^{-2}st^{2}$. Hence
\[
\langle \hspace{.5pt} s,t,x,y\mid 
x^{4}, \hspace{.5pt}
xyxyxyx^{-2}, \hspace{.5pt}
s^{4}, \hspace{.5pt}
stststs^{-2}, \hspace{.5pt}
x^{-2}s^{2}, \hspace{.5pt}
xyx^{-1}t^{-5}, \hspace{.5pt}
y^{5}tst, \hspace{.5pt}
y^{2}xy^{-2}t^{-2}s^{-1}t^{2}
\hspace{.5pt} \rangle
\]
is a presentation 
 of $\SL(2,\Z[1/5])$.

\vspace{5pt}

\noindent \textbf{$\bullet$} 
\, We similarly calculate that 
 $\SL(2, \Z[1/7])$ 
has presentation
\begin{align*}
\langle \hspace{.5pt} s,t,x,y\ | & \ 
s^{4}, \hspace{.5pt}
stststs^{-2}, \hspace{.5pt}
x^{4}, \hspace{.5pt}
xyxyxyx^{-2}, \hspace{.5pt}
x^{-2}s^{2}, xyx^{-1}t^{-7},\\
 &  \ 
y^{3}x^{-1}y^{-2}t^{-3}st^{2}, \hspace{.5pt}
y^{-3}x^{-1}y^{2}t^{3}st^{-2}
\hspace{.5pt} \rangle.
\end{align*}
The generating set 
in this example is redundant.

\vspace{5pt}

\noindent \textbf{$\bullet$} \,
 The results for $p=5$ and $p=7$
combine to give 
\begin{align*}
\SL(2,\Z[{\textstyle \frac{1}{35}}])
=
\langle \hspace{.5pt} s,t,x,y,c,d  \ \, | & 
\ s^{4}, \hspace{.5pt} 
stststs^{-2}, \hspace{.5pt}
x^{4}, xyxyxyx^{-2}, \hspace{.5pt}
x^{-2}s^{2}, \hspace{.5pt}
xyx^{-1}t^{-5},\\
 & \ y^{5}tst, \hspace{.5pt}
y^{2}xy^{-2}t^{-2}s^{-1}t^{2}, \hspace{.5pt}
c^{4}, \hspace{.5pt}
cdcdcdc^{-2}, c^{-2}s^{2},\\
 &  \
cdc^{-1}t^{-7}, \hspace{.5pt}
d^{-3}c^{-1}d^{2}t^{3}st^{-2}, \hspace{.5pt}
d^{3}c^{-1}d^{-2}t^{-3}st^{2}
 \hspace{.5pt} \rangle
\end{align*}
where $x=x_5$, $y=y_5$, $c= x_7$, 
and $d= y_7$.

\vspace{5pt}

We implemented the above approach 
as a {\sf GAP} procedure. 
 This accepts an integer $b>1$, and
returns a presentation of $\Gamma = G(1/b)$ 
on $2+2|\pi(b)|$ generators.
In our experience, 
this procedure completed rapidly 
for prime $b\approx \allowbreak 2000$. 
 For composite $b$,
the cost is basically just that of the 
prime factors, since the amalgamation 
is straightforward.
For larger $a$, we find short word 
expressions by systematically 
enumerating all words by 
increasing length.

\subsection{$S$-arithmeticity 
of $G(m)$}\label{tests}

In this section we justify 
 $S$-arithmeticity 
(and thereby non-freeness)
of $G(m)$ for a range of rational 
$m$ between $0$ and $2$. 
 
First, we express the generators
 $A=A(m)$ and $B=B(m)$ of $G=G(m)$ as 
 words in the generators of $\Gamma$. 
 When $m = a/p$ for a prime $p$,
 this is easy:
$A(a/p) = A(1/p)^a = y^{-a}$ 
and $B(a/p)=sy^as^{-1}$.
For larger $a$,
we try to find short word 
expressions by random search. 
Then we take the subgroup of 
$\Gamma$ defined by the
words for $A$ and $B$ 
and carry out Todd--Coxeter coset 
enumeration (within the {\sf GAP} 
package ACE~\cite{actetc}).
If the enumeration terminates, then
rewriting using the modified Todd--Coxeter
algorithm of \cite{Neubueser81} finds 
relators in $A$ and $B$.
Unless $a$ is small, these relators 
tend to be long; e.g., the shortest relator 
discovered for $m=4/11$ is
$A^{121}BA^{-11}B^{2}A^{-121}B^{-1}A^{11}B^{-2}$.
Relator length typically increases 
with the index of $G$ in $\Gamma$ 
(governed by $a$; Theorem~\ref{Conjecture}~(ii)). 
This suggests that searching for short relators 
with particular 
patterns (cf.~\cite{Beardon})
has limited chance of 
proving non-freeness.
Also recall that if $G$ is $S$-arithmetic, then
it contains upper unitriangular elements $u$ 
and $v$ such that $\langle u, v\rangle$ is 
non-cyclic. Once we express
$uvu^{-1}v^{-1}$ as a word
in $A$ and $B$, we have 
 a non-trivial relator in $G$.

Stages in the coset enumeration 
may entail significantly 
(and unboundedly) more cosets than $|\Gamma :G|$, 
so that coset enumeration for $G$ in $\Gamma$ 
may fail to terminate within reasonable time or
available memory, even for small indices. 
We emphasize that failure to terminate is 
\emph{not} a proof that $G$ is thin.

If finite, the index $a\hspace{1pt} |\SL(2,\Z_a)|$ 
of $G(a/b)$ in $\Gamma$ is greater than
$10^6$, $10^7$, $10^8$, 
for $a=\allowbreak 33$, 
$a=\allowbreak 59$, $a=101$, respectively.
With four generators, and ignoring 
overhead, each row in the coset table 
requires at least $4\cdot 2\cdot 8=64$ 
bytes of memory.  This means that 1GB
of memory can store no more than $10^7$ cosets. 
By this reckoning, we expect that coset enumeration 
becomes difficult when $a$ is between $40$ 
and $50$. If the index is finite, then we 
expect that enumeration succeeds if $a\le 25$.

Returning to an earlier point: 
$S$-arithmeticity of $G(a/b)$ implies 
$S$-arithmeticity of $G(a/kb)$
for every integer $k>0$, 
so we could restrict 
experimentation to 
groups $G( a/p)$ where $p$ is 
prime. However, 
the practicality of an attempt 
to prove $S$-arithmeticity of $G(a/b)$ is 
affected by the size of $a$.
Thus, we investigated $G(m)$ 
with $m=a/b < 2$ where $b=p^k$, 
$p\le 23$ prime. 
For each fixed denominator $b$, 
we found an integer 
$a_{\mathrm{max}}$ 
 such that $G(a/b)$ is 
$S$-arithmetic whenever 
 $a\le a_{\mathrm{max}}$. 
Table~1 displays
output of the experiments.

\bigskip

\begin{table}[htb]
\label{Table1}
{\small 
\begin{tabular}{|c||c|c|c|c|c|c|c|c|c|c|c|c|c|c|c|c|}\hline
$a_{\mathrm{max}}$ &
 $3$  &
 $7$  &
$13$  &
$23$  &
$37$  &
 $45$  &
 $57$  & 
 $5$  &
$14$  &
 $31$  &
 $41$  & 
 $9$  &
  $28$  &
 $39$ &
 $11$ &
$25$ 
 \\ \hline
$b$ & 
$2$&
$4$& 
$8$& 
$16$& 
$32$& 
$64$& 
$128$& 
$3$& 
$9$&  
$27$&   
$81$&  
$5$&  
$25$&   
$125$&  
$7$&  
$49$
\\ 
\hline 
\end{tabular}
}

\medskip

{\small 
\begin{tabular}{|c||c|c|c|c|c|c|}\hline
$a_{\mathrm{max}}$ &
$12$ &
 $23$  & 
 $15$  & 
 $14$  & 
  $14$  & 
 $14$ \\ \hline
$b$ &   
$11$&  
$121$&  
$13$&  
$17$&  
$19$&   
$23$ 
\\ 
\hline 
\end{tabular}
}
\medskip
\caption{Values of 
 $a_{\mathrm{max}}$
and $b$ such that 
$G(m)$ is $S$-arithmetic for all
$m=\frac{a}{b}$, where
$a\leq a_{\mathrm{max}}$}
\end{table}

\vspace{-5pt}

We also managed to prove that 
$G(m)$ is $S$-arithmetic for the following
$m= a/b$ where $a$ exceeds 
$a_{\mathrm{max}}$ for the given $b$:
$63/64, 65/64$,
$44/125, 51/125, 57/125$,
$29/49$.

Unsuccessful enumerations were 
re-attempted for comparatively 
small $a$. Here we tried
strategies such as increased memory, 
simplifying the
presentation, or use of intermediate 
subgroups.
None of these helped, leading us
to suspect that the 
maximum numerator values 
up to $30$ or so are likely to be
correct.

Our experiments show that many $G(m)$ 
previously known to be non-free 
 are $S$-arithmetic (see
\cite{Beardon,Farbman,LyndonUllman}).
As a single complementary example,
 we proved that $G(11/19)$ 
is not free,
whereas freeness of $G(11/19)$ was 
unresolved in~\cite{Beardon}.

Our {\sf GAP} code is posted at
\url{https://github.com/hulpke/arithmetic}.

\vspace{1.5pt}

\medskip

\noindent \textbf{Acknowledgment.}
 We thank Prof.~Vladimir Shpilrain 
for helpful 
conversations. 
We also thank Mathematisches 
Forschungsinstitut 
Oberwolfach and Centre 
International de Rencontres 
Math\'{e}matiques, Luminy, 
for  
hosting our visits under their
 Research Fellowship and 
Research in Pairs programmes.
The third author's work has 
been supported in part by 
NSF~Grant~DMS-1720146 and 
Simons Foundation Grant~852063, 
which are gratefully acknowledged.

\medskip

\bibliographystyle{amsplain}

\end{document}